\newtheorem{theorem}{Theorem}
\newtheorem{lemma}{Lemma}
\newtheorem{prop}[theorem]{Proposition}
\theoremstyle{remark}
\newtheorem{exmp}{Example}
\renewcommand{\leq}{\leqslant}
\renewcommand{\geq}{\geqslant}
\title{Residue races of the number of prime divisors function}
\author[S. Porritt]{Sam Porritt}
\address{Department of Mathematics\\University College London\\
25 Gordon Street, London, England}
\email{samuel.porritt.15@ucl.ac.uk}
\begin{document}

\onehalfspacing

\maketitle

\begin{abstract}
We investigate the distribution of the function $\omega(n)$, the number of distinct prime divisors of $n$, in residue classes modulo $q$ for natural numbers $q$ greater than 2. In particular we ask `prime number races' style questions, as suggested by Coons and Dahmen in their paper `On the residue class distribution of the number of prime divisors of an integer'.
\end{abstract}

\section{Introduction}

Let $q \geq 2$ be an integer, $a\in\{0,1,\ldots, q-1\}$ represent some residue class modulo $q$ and $\omega(n)$ denote the number of distinct prime divisors of $n$. Define
$$N_{a,q}(x):=\#\{n \leq x : \omega(n)\equiv a \mod q\}.$$
Seeing no reason why $\omega(n)$ should favour any particular residue class, we expect that for all $a$,
\begin{equation}
N_{a,q}(x)\sim \frac{x}{q} \:\:\:\: \text{ as } \:\:\: x \rightarrow \infty.
\end{equation}

In fact, it was proved in~\cite{Add} that 
$$N_{a,q}(x) - \frac{x}{q} = O\left(\frac{x}{(\log x)^{c(q)}}\right)$$
with $c(q) = 1-\cos(\frac{2\pi}{q})$. It was also proved that for $q>2$ the error term here is best possible, since it was also determined that for $q>2$
$$N_{a,q}(x) - \frac{x}{q} = \Omega_{\pm}\left(\frac{x}{(\log x)^{c(q)}}\right).$$ This is in stark contrast to the case $q=2$ for which we expect ``square-root cancellation". Indeed,
$$
N_{a,2}(x) - \frac{x}{2} =O(x^{1/2+o(1)}) \:\: \text{  for  }  a=0 \text{  and  } 1 
$$
is equivalent to the Riemann Hypothesis. For $q=2$, it is well known that (1) is equivalent to the prime number theorem. 

In~\cite{Coons-Dah}, the authors suggest that, in the spirit of prime number races, it would be interesting to investigate the sign changes of $N_{a,q}(x)-N_{b,q}(x).$ The traditional prime number races concern the popularity of residue classes for prime numbers rather than for the values of $\omega$, that is, sign changes of $\pi(x;a,q)-\pi(x;b,q)$ where $\pi(x;a,q)$ is the number of primes less than or equal to $x$ which are congruent to $a$ modulo $q$. Rubinstein and Sarnak~\cite{Rub-Sar} proved under certain reasonable assumptions that the set
$$\{x \in \mathbb{N} \: : \:\pi(x;3,4)<\pi(x;1,4)\},$$
for example, does not have a natural density in the integers but does have a \textit{logarithmic density}, defined for a subset $E\subset \mathbb{N}$, if the limit exists, to be $\lim_{X\rightarrow \infty}\frac{1}{\log X}\sum_{\substack{ x\leq X \\ x \in E }}\frac{1}{x}$. Loosely speaking, the reason for this is that the difference $\pi(x;3,4) - \pi(x;1,4)$ can be written  as a sum of terms of the form $\sin(\gamma \log x)/\gamma$, where $\gamma$ ranges over the imaginary parts of the zeros of certain Dirichlet $L$-functions. For an introduction to this topic see~\cite{Gran-Martin}.

For our investigations it is natural to consider mean values of the multiplicative functions $n\mapsto z^{\omega(n)}$ where $z$ is taken to be a complex $q$-th root of unity. By applying a classical result first due to Selberg concerning such mean values we will establish an asymptotic formula for $N_{a,q}(x)$ with main term and next highest order term in the case $q>2$.  This will be used to prove our main theorem. The formula will contain an expression of the form $\cos(A \log \log x + B)$ and so in our case we have neither natural nor logarithmic density, but instead need to go further and define the notion of \emph{loglog density}. We say a subset $E \subset \mathbb{N}$ has \emph{loglog density} $\delta$ if $$\frac{1}{\log \log X}\sum_{\substack{ x\leq X \\ x \in E }}\frac{1}{x\log x} \rightarrow \delta \text{  as  } X \rightarrow \infty.$$ With this we can now state our main theorems.

\begin{theorem}\label{thm1}
Let $q>2$ be an integer and $a,b\in \{0,1,\ldots, q-1\}$ with $a\neq b$. The set $$E_{a,b}:=\{x \in \mathbb{N} \: :\: N_{a,q}(x) < N_{b,q}(x) \}$$ has no natural density, in fact $$\liminf_{X\rightarrow \infty}\frac{1}{X}[1,X]\cap E_{a,b} = 0 \:\:\text{  and  }\:\:  \limsup_{X\rightarrow \infty}\frac{1}{X}[1,X]\cap E_{a,b} = 1.$$
\end{theorem}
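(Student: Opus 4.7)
The plan is to use an explicit asymptotic formula for $N_{a,q}(x)$ with a second-order oscillating term, which the introduction promises to establish via Selberg's theorem. Orthogonality of characters mod $q$ gives
$$N_{a,q}(x) = \frac{1}{q}\sum_{j=0}^{q-1} z_j^{-a} S_j(x), \qquad S_j(x):=\sum_{n\leq x} z_j^{\omega(n)},$$
with $z_j = e^{2\pi i j/q}$. Selberg's theorem yields $S_j(x) = \frac{x(\log x)^{z_j-1}}{\Gamma(z_j)} + O\!\left(x/(\log x)^{2-\mathrm{Re}(z_j)}\right)$. The $j=0$ term contributes $x/q$. The dominant oscillation comes from the complex-conjugate pair $j=1, q-1$, since $\mathrm{Re}(z_j)-1 = -c(q)$ is maximal for these two; the remaining $j\in\{2,\ldots,q-2\}$ contribute at most $O\!\left(x/(\log x)^{1-\cos(4\pi/q)}\right)$, which is strictly smaller.

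Combining the $j=1$ and $j=q-1$ terms using $\overline{z_1}=z_{q-1}$ and $(\log x)^{i\sin(2\pi/q)} = e^{i\alpha\log\log x}$, where $\alpha := \sin(2\pi/q)>0$ since $q>2$, yields
$$N_{a,q}(x) = \frac{x}{q} + \frac{x}{(\log x)^{c(q)}}\Bigl(A\cos(\alpha\log\log x + \phi_a) + o(1)\Bigr)$$
for some $A>0$ and $\phi_a = \theta - 2\pi a/q$, where $\theta$ depends only on $\arg\Gamma(z_1)$. Subtracting the analogous formula for $b$ and applying $\cos X - \cos Y = -2\sin\!\bigl(\tfrac{X+Y}{2}\bigr)\sin\!\bigl(\tfrac{X-Y}{2}\bigr)$ gives
$$N_{a,q}(x) - N_{b,q}(x) = \frac{x}{(\log x)^{c(q)}}\Bigl(D_{a,b}\sin(\alpha\log\log x + \psi_{a,b}) + o(1)\Bigr),$$
with $D_{a,b} = -2A\sin\!\bigl(\pi(b-a)/q\bigr)$, which is nonzero because $0<|b-a|<q$.

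To deduce the statement I would exploit the extreme slow variation of $\log\log x$. Fix small $\eta>0$. Since $\sin$ attains negative values on an interval of positive length, there exist $\varepsilon_0>0$ and arbitrarily large $U_n$ with $D_{a,b}\sin(\alpha u + \psi_{a,b})\leq -\varepsilon_0$ for every $u\in [U_n-\eta, U_n]$. Setting $X_n := \exp\exp(U_n)$, every $x$ with $X_n^{e^{-\eta}}\leq x\leq X_n$ satisfies $\log\log x\in [U_n-\eta, U_n]$; for $x$ large enough this forces $N_{a,q}(x) - N_{b,q}(x)<0$, i.e.\ $x\in E_{a,b}$. Because $X_n^{e^{-\eta}}/X_n = X_n^{e^{-\eta}-1}\to 0$, the interval $[X_n^{e^{-\eta}}, X_n]$ exhausts almost all of $[1,X_n]$, so $|[1,X_n]\cap E_{a,b}|/X_n\to 1$, giving $\limsup = 1$. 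The same argument applied on an interval where $D_{a,b}\sin(\alpha u + \psi_{a,b})\geq\varepsilon_0>0$ yields $\liminf = 0$.

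The only subtle algebraic point in this argument is the non-vanishing of $D_{a,b}$, which is immediate from the sum-to-product identity. The real work is upstream, in pinning down the Selberg asymptotic with error $o(x/(\log x)^{c(q)})$ rather than the $O(x/(\log x)^{c(q)})$ bound quoted from \cite{Add}: one must extract the next-order $(\log x)^{-c(q)}$ oscillation cleanly, which requires Selberg's theorem with a precise error term and a careful separation of the dominant conjugate pair of characters from the lower-order ones.
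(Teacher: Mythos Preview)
Your argument is correct and is essentially the paper's own route: the paper derives exactly your asymptotic (Proposition~\ref{prop5}) from Selberg's theorem by isolating the conjugate pair $j=1,\,q-1$, then (in Proposition~\ref{prop7} and the proof of Theorem~\ref{thm3}) uses the identical idea that on an interval $[e_2(U-\eta),e_2(U)]$ the quantity $\log_2 x$ is pinned in $[U-\eta,U]$ while the lower endpoint is $o$ of the upper, forcing $\liminf=0$ and $\limsup=1$. One small slip: the leading constant in Selberg's formula is $g(z)=\frac{1}{\Gamma(z)}\prod_p\bigl(1+\tfrac{z}{p-1}\bigr)(1-p^{-1})^{z}$, not $1/\Gamma(z)$ alone, so $\theta$ depends on the whole product, not just on $\arg\Gamma(z_1)$; this does not affect your argument since you only use $A>0$, which for $q>2$ follows because $\Gamma$ has no pole at $z_1$ and every Euler factor is nonzero.
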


\begin{theorem}\label{thm2}
The set $E_{a,b}$, defined in Theorem~\ref{thm1}, has no natural or logarithmic density, but has \emph{loglog density} equal to $1/2$.
\end{theorem}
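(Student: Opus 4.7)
The plan is to refine the method behind Theorem~\ref{thm1} to obtain a precise two-term asymptotic for $N_{a,q}(x)-N_{b,q}(x)$ with an explicit oscillating factor, and then reduce each density statement to an integration of that oscillation. Starting from $N_{a,q}(x)=\frac{1}{q}\sum_{j=0}^{q-1}\zeta^{-aj}\sum_{n\le x}\zeta^{j\omega(n)}$ with $\zeta=e^{2\pi i/q}$, I would apply the Selberg--Delange formula for $\sum_{n\le x}z^{\omega(n)}$ at $z=\zeta^{j}$. The term $j=0$ gives the main term $x/q$; the conjugate pair $j=1,\,q-1$ contributes a secondary term of size $x(\log x)^{-c(q)}$ carrying the oscillation $e^{\pm i\alpha \log\log x}$ with $\alpha=\sin(2\pi/q)>0$; and the remaining $j\in\{2,\dots,q-2\}$ contribute $O(x(\log x)^{-c(q)-\eta})$ for some $\eta=\eta(q)>0$, because $\cos(2\pi j/q)<\cos(2\pi/q)$ there. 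After applying the difference-to-product identity to the two resulting cosines, this produces
$$N_{a,q}(x)-N_{b,q}(x)=C_{a,b}\,x(\log x)^{-c(q)}\sin\!\bigl(\alpha\log\log x+\beta_{a,b}\bigr)+O\!\left(\frac{x}{(\log x)^{c(q)+\eta}}\right),$$
with a nonzero constant $C_{a,b}$ (nonzero because $a\not\equiv b\pmod q$) and a real phase $\beta_{a,b}$.

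Setting $u=\log\log x$, this formula shows that outside an exceptional set $B$ where $|\sin(\alpha\log\log x+\beta_{a,b})|\ll(\log x)^{-\eta}$, membership in $E_{a,b}$ is equivalent to a fixed sign condition on $\sin(\alpha u+\beta_{a,b})$. In $u$-coordinates, $B$ is a union of intervals about the zeros of the sine whose lengths decay like $e^{-\eta u}$, so their total Lebesgue measure up to any height is $O(1)$. For the loglog density, the substitution $u=\log\log x$ (so $du=dx/(x\log x)$) converts the weighted sum into
$$\frac{1}{\log\log X}\int_{u_{0}}^{\log\log X}\mathbf{1}_{E_{a,b}}(e^{e^{u}})\,du,$$
and the computation above lets me replace the indicator (with $o(1)$ cost) by the indicator of the subset of $u$ on which $\sin(\alpha u+\beta_{a,b})$ has the correct sign. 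Since $\alpha>0$, that set has Lebesgue density exactly $1/2$, giving the claimed loglog density.

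For the logarithmic density I would use the same asymptotic but substitute $v=\log x$ and then $w=\log v$, converting the logarithmic average into
$$\frac{1}{e^{W}}\int_{-\infty}^{W}\mathbf{1}_{\{\,s\sin(\alpha w+\beta_{a,b})>0\,\}}\,e^{w}\,dw+o(1),\qquad W=\log\log X,\ s=\pm 1.$$
A direct change of variables shows that the leading expression is $(2\pi/\alpha)$-periodic in $W$ and plainly non-constant, so no limit exists as $W\to\infty$. (The absence of a natural density is already contained in Theorem~\ref{thm1}.) The main technical point I expect to need is a quantitative form of Selberg--Delange with a genuine power-saving in the secondary term (my $\eta>0$); this is what controls the exceptional set $B$ in the loglog average and turns the qualitative oscillation of Theorem~\ref{thm1} into the precise density value $1/2$.
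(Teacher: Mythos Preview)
Your argument is correct and rests on the same two ingredients the paper uses: the Selberg--Delange asymptotic (the paper's Proposition~\ref{prop5}), which after the sum-to-product identity gives exactly your formula $N_{a,q}(x)-N_{b,q}(x)=C_{a,b}\,x(\log x)^{-c(q)}\sin(\alpha\log\log x+\beta_{a,b})+O(x(\log x)^{-c(q)-\eta})$ with $C_{a,b}\ne 0$, and a Proposition~\ref{prop7}-style density computation for sets of the form $\{x:\sin(\alpha\log\log x+\beta)<0\}$. Your handling of the exceptional set $B$ via the $e^{-\eta u}$ interval lengths plays the same role as the paper's $U^{\epsilon\pm}$ sandwiching.

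The only real difference is organizational. The paper first proves the stronger Theorem~\ref{thm3} on complete orderings and then deduces Theorem~\ref{thm2} by the combinatorial observation that exactly half of the $2q$ orderings with positive loglog density satisfy $N_{a,q}<N_{b,q}$; the absence of natural and logarithmic density is inherited from Proposition~\ref{prop7} once one knows $E_{a,b}$ agrees (up to a set of bounded loglog weight) with a set of that shape. You instead attack $E_{a,b}$ directly without passing through the orderings. Your route is more economical if Theorem~\ref{thm2} is the sole goal; the paper's route gives Theorem~\ref{thm3} for free along the way.
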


Given a complete ordering on the residue classes, we can also ask how often the different `competitors' in our race are in that order.

\begin{theorem}\label{thm3}
Let $q>2$ be an integer and $a \in \{0, 1, \ldots, q-1\}$. Each of the following sets has \emph{loglog density} $\frac{1}{2q}$
$$U_{a,q}:=\{x \in \mathbb{N} \: :\: N_{a, q}(x)>N_{a-1, q}(x)>N_{a+1, q}(x)>\cdots>N_{a-i,q}(x)>N_{a+i,q}(x)>\cdots \}$$
$$V_{a, q}:=\{x \in \mathbb{N} \: :\: N_{a, q}(x)>N_{a+1, q}(x)>N_{a-1, q}(x)>\cdots>N_{a+i,q}(x)>N_{a-i,q}(x)>\cdots \}.$$ Therefore, since there are $2q$ of them, these are the only permutations which appear with non-zero \emph{loglog densities.}
\end{theorem}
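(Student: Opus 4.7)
The plan rests on an asymptotic for $N_{a,q}(x)$ with one more term than the main term $x/q$, which is essentially the formula underlying Theorems~\ref{thm1} and~\ref{thm2}. Writing $\zeta = e^{2\pi i/q}$ and using orthogonality, $N_{a,q}(x) = \frac{1}{q}\sum_{j=0}^{q-1}\zeta^{-aj}\sum_{n\leq x}\zeta^{j\omega(n)}$. Applying Selberg's theorem on mean values of $z^{\omega(n)}$ at each $z = \zeta^j$ with $j\neq 0$, the inner sum has size $x(\log x)^{\mathrm{Re}(\zeta^j)-1}$. The pair $j=1, j=q-1$ dominates all the others because $\cos(2\pi/q)$ is the largest value of $\mathrm{Re}(\zeta^j)$ for $1\leq j\leq q-1$, and the two are complex conjugates. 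After combining them one arrives at
\begin{equation*}
N_{a,q}(x) = \frac{x}{q} + \frac{Cx}{(\log x)^{c(q)}}\cos\!\Bigl(\theta(x) - \tfrac{2\pi a}{q}\Bigr) + O\!\Bigl(\frac{x}{(\log x)^{c(q)+\eta}}\Bigr)
\end{equation*}
for explicit constants $C>0$, $\eta>0$, $c(q) = 1-\cos(2\pi/q)$, and $\theta(x) = \sin(2\pi/q)\log\log x + \theta_0$ for some fixed phase $\theta_0$.

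Next I would carry out the geometry of the $q$ cosines $\phi_k(\theta) := \cos(\theta - 2\pi k/q)$. Two of them agree exactly when $\theta \equiv \pi(k+k')/q \pmod{\pi}$, so the $2q$ points $\pi m/q$, $0\leq m < 2q$, cut $[0,2\pi)$ into $2q$ open arcs of length $\pi/q$ on each of which the order of the $\phi_k$'s is constant. A direct check of the ordering at $\theta \to 0^-$ and $\theta \to 0^+$ shows that the arc $(2\pi a/q - \pi/q,\ 2\pi a/q)$ produces exactly the ordering defining $U_{a,q}$, and the arc $(2\pi a/q,\ 2\pi a/q + \pi/q)$ the ordering defining $V_{a,q}$. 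All other permutations of $\{N_{0,q}(x),\ldots,N_{q-1,q}(x)\}$ are simply never realised by the leading oscillation.

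It then remains to show that for any subinterval $I\subset[0,2\pi)$, the set $\{x \in \mathbb{N} : \theta(x) \bmod 2\pi \in I\}$ has loglog density $|I|/(2\pi)$. The substitution $u = \log\log x$ converts $dx/(x\log x)$ into $du$ and reduces the problem to the Ces\`{a}ro mean $\frac{1}{U}\int_{0}^{U}\mathbf{1}_{I}(\sin(2\pi/q)\,u + \theta_0 \bmod 2\pi)\,du \to |I|/(2\pi)$, which is immediate because $\sin(2\pi/q)\neq 0$ for $q>2$. Taking $|I| = \pi/q$ gives loglog density $1/(2q)$ for each $U_{a,q}$ and $V_{a,q}$. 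Since the $2q$ densities already add up to $1$, no other permutation can occur with positive loglog density.

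The main obstacle is that near the $2q$ switching points of $\theta$ the oscillatory main term is too small to beat the $O((\log x)^{-c(q)-\eta})$ remainder, so strictly speaking the ordering of the $N_{a,q}(x)$'s could differ from that of the $\phi_k(\theta(x))$'s there. This is handled by a shrinking buffer: for each $\varepsilon>0$, remove the set $B_\varepsilon$ of $x$ whose $\theta(x)$ is within $\varepsilon$ of a switching point. The equidistribution argument of the previous paragraph gives $B_\varepsilon$ loglog density $O(\varepsilon)$, while outside $B_\varepsilon$ the main-term difference is bounded below by a constant times $\varepsilon\cdot x/(\log x)^{c(q)}$ and so dominates the error once $x$ is large. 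Letting $\varepsilon \to 0$ finishes the identification of $U_{a,q}$ and $V_{a,q}$ up to loglog-null sets.
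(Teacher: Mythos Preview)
Your proposal is correct and follows essentially the same approach as the paper: derive the secondary cosine term via Selberg's mean-value theorem and orthogonality (the paper's Proposition~\ref{prop5}), identify the arc $(2\pi a/q-\pi/q,\,2\pi a/q)$ for $U_{a,q}$ and $(2\pi a/q,\,2\pi a/q+\pi/q)$ for $V_{a,q}$, prove loglog-equidistribution of $\theta(x)\bmod 2\pi$, and absorb the error term via an $\varepsilon$-buffer that is sent to zero. The paper carries out the arc identification by explicit product-to-sum computations and the equidistribution by direct summation \`a la Proposition~\ref{prop7}, whereas you phrase both steps more geometrically, but the skeleton is identical.
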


\begin{exmp}
When $q=6$ and $a=4$ we get
$$\lim_{X\rightarrow \infty}\frac{1}{\log \log X}\sum_{\substack{x \leq X \\ N_{4, 6}(x)>N_{3, 6}(x)>N_{5, 6}(x)>N_{2, 6}(x)>N_{0, 6}(x)>N_{1, 6}(x)}}\frac{1}{x\log x}=\frac{1}{12}$$
$$\lim_{X\rightarrow \infty}\frac{1}{\log \log X}\sum_{\substack{x \leq X \\ N_{4, 6}(x)>N_{5, 6}(x)>N_{3, 6}(x)>N_{0, 6}(x)>N_{2, 6}(x)>N_{1, 6}(x)}}\frac{1}{x\log x}=\frac{1}{12}.$$
\end{exmp}

Theorem~\ref{thm1} follows easily from the proofs of Proposition~\ref{prop7} and Theorem~\ref{thm3}. Theorem~\ref{thm2} follows from Theorem~\ref{thm3} because, of the $2q$ permutations with non-zero loglog density, there are $1/q$ in which $N_{a,q}(x)<N_{b,q}(x)$.

We also prove that certain orderings can occur only a finite number of times.

\begin{theorem}\label{thm4}
Any permutation $\sigma$ of $\{0,1,\ldots, q-1\}$ for which the set $$\{x\in \mathbb{N}\: :\: N_{\sigma(0),q}(x)\geq N_{\sigma(1), q}(x) \geq \ldots \geq N_{\sigma (q-1),q}(x) \}$$ is infinite is such that $\sigma(0) = \sigma (1) \pm 1 \mod q$.
\end{theorem}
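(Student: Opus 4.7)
My plan is to use the two-term asymptotic for $N_{a,q}(x)$ that should emerge from the proof of Proposition~\ref{prop7}. By applying Selberg's theorem to each mean value $\sum_{n\leq x}\zeta^{\omega(n)}$ for $\zeta$ a $q$-th root of unity and then using orthogonality, the dominant oscillating contribution comes from $\zeta=e^{\pm 2\pi i/q}$ (which combines into a cosine), while the next pair $\zeta=e^{\pm 4\pi i/q}$ feeds into the error. One obtains
\[
N_{a,q}(x) \;=\; \frac{x}{q} \;+\; \frac{Mx}{(\log x)^{c(q)}}\cos\!\left(\beta\log\log x+\gamma-\frac{2\pi a}{q}\right) \;+\; O\!\left(\frac{x}{(\log x)^{c(q)+\eta}}\right),
\]
for constants $M>0$, $\beta=\sin(2\pi/q)>0$, $\gamma\in\mathbb{R}$, and some $\eta>0$. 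Set $\alpha(x):=q(\beta\log\log x+\gamma)/(2\pi)\in\mathbb{R}/q\mathbb{Z}$, and for each $c$ let $d_c(x)$ be the circular distance from $c$ to $\alpha(x)$. The cosine factor equals $\cos(2\pi d_c(x)/q)$, a strictly decreasing function of $d_c(x)\in[0,q/2]$, so up to the error term the residues are ordered by $N_{c,q}(x)$ exactly as they are by $d_c(x)$ from smallest to largest.

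Assume, for contradiction, that the set in the statement is infinite while $a:=\sigma(0)$ and $b:=\sigma(1)$ satisfy $b\not\equiv a\pm 1\pmod q$ (this forces $q\geq 4$, since every pair of residues is adjacent modulo $3$). Then $a-1$ and $a+1$ are distinct from both $a$ and $b$, and occupy positions after $b$ in $\sigma$, so the hypothesis yields, for infinitely many $x$,
\[
N_{a,q}(x)\geq N_{a\pm 1,q}(x)\quad\text{and}\quad N_{b,q}(x)\geq N_{a\pm 1,q}(x).
\]
Write $s:=\alpha(x)-a$ with representative in $[-q/2,q/2]$, so $d_a(x)=|s|$. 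The first pair of inequalities forces $|s|\leq\tfrac12+O((\log x)^{-\eta})$: otherwise, say $s>\tfrac12+\varepsilon$ for a fixed $\varepsilon>0$, the main-term deficit $\cos(2\pi s/q)-\cos(2\pi(1-s)/q)$ would be a strictly negative constant that the error $O((\log x)^{-\eta})$ cannot absorb for large $x$, contradicting $N_{a,q}(x)\geq N_{a+1,q}(x)$.

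WLOG take $s\geq 0$ at the given $x$ (the case $s\leq 0$ is symmetric, using $a-1$ in place of $a+1$). Then $d_{a+1}(x)=1-s$, while the non-adjacency of $b$ gives $d_b(x)\geq 2-s\geq \tfrac32-o(1)$. Applying $\cos A-\cos B=2\sin\tfrac{A+B}{2}\sin\tfrac{B-A}{2}$ with $A=2\pi d_{a+1}(x)/q$ and $B=2\pi d_b(x)/q$, both sines are bounded below by positive constants depending only on $q$: indeed $d_b-d_{a+1}\geq\tfrac12-o(1)$ lies in $(0,q/2)$, and $d_{a+1}+d_b$ stays inside $(0,q)$ bounded away from either endpoint. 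Hence $\cos(2\pi d_{a+1}/q)-\cos(2\pi d_b/q)\geq\kappa(q)>0$ for $x$ sufficiently large, giving
\[
N_{a+1,q}(x)-N_{b,q}(x)\;\geq\; \kappa(q)\cdot\frac{Mx}{(\log x)^{c(q)}}\;-\;O\!\left(\frac{x}{(\log x)^{c(q)+\eta}}\right)\;>\;0,
\]
which contradicts $N_{b,q}(x)\geq N_{a+1,q}(x)$ and forces the set to be finite.

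The main subtlety is the boundary regime $|s|\approx\tfrac12$: when $s=\tfrac12$ exactly and $b=a+2$, one has $d_{a-1}=d_b=\tfrac32$, so comparing $N_{b,q}$ with the \emph{opposite} neighbour $N_{a-1,q}$ leaves no main-term gap to exploit. This is precisely why one must invoke the neighbour of $a$ lying on the \emph{same} side as $\alpha(x)$: the gap $d_b-d_{a+1}$ (resp.\ $d_b-d_{a-1}$) stays at least $\tfrac12-o(1)$ uniformly over the range of $s$ permitted by the first step, and this robustness is what makes the argument close.
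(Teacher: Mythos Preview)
Your argument is correct and follows essentially the same route as the paper's proof: both use the two-term asymptotic (Proposition~\ref{prop5}, not Proposition~\ref{prop7} as you cite) to first localize the phase of the leader $a=\sigma(0)$ to within $\tfrac{\pi}{q}+o(1)$ of an integer multiple of $2\pi$ (your $|s|\le \tfrac12+o(1)$), and then use the product-to-sum identity for cosines to show that the neighbour of $a$ on the same side as the phase must beat any non-adjacent $b$. Your repackaging via the circular distance $d_c(x)$ on $\mathbb{R}/q\mathbb{Z}$ is a clean geometric way to phrase the same trigonometric comparison; the only cosmetic differences are that the paper proves the slightly stronger $\max_{\pm} N_{a\pm 1,q}(x)>N_{b,q}(x)$ for \emph{all} non-adjacent $b$, and that your stated gap $d_b-d_{a+1}\ge \tfrac12-o(1)$ is in fact $\ge 1$.
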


Notice that there are $2q!/(q-1)$ such permutations so if $q$ is large, a vanishingly small proportion of the possible permutations occur infinitely often.

\begin{exmp}
If $q=4$ then only 16 out of the 24 orderings can occur for arbitrarily large $x$. There is a point after which, if ``0 is in the lead", then 2 cannot be second and vice versa. Similarly, they cannot simultaneously hold positions 3rd and 4th, and the same goes for the pair of residue classes 1 and 3.
\end{exmp}

Let us look at the start of the mod 4 race before moving on to the proofs. For a better view, the mean has been subtracted and the points are plotted on a loglog scale.

\begin{figure}[H]
{
\centering
\includegraphics[scale=0.45]{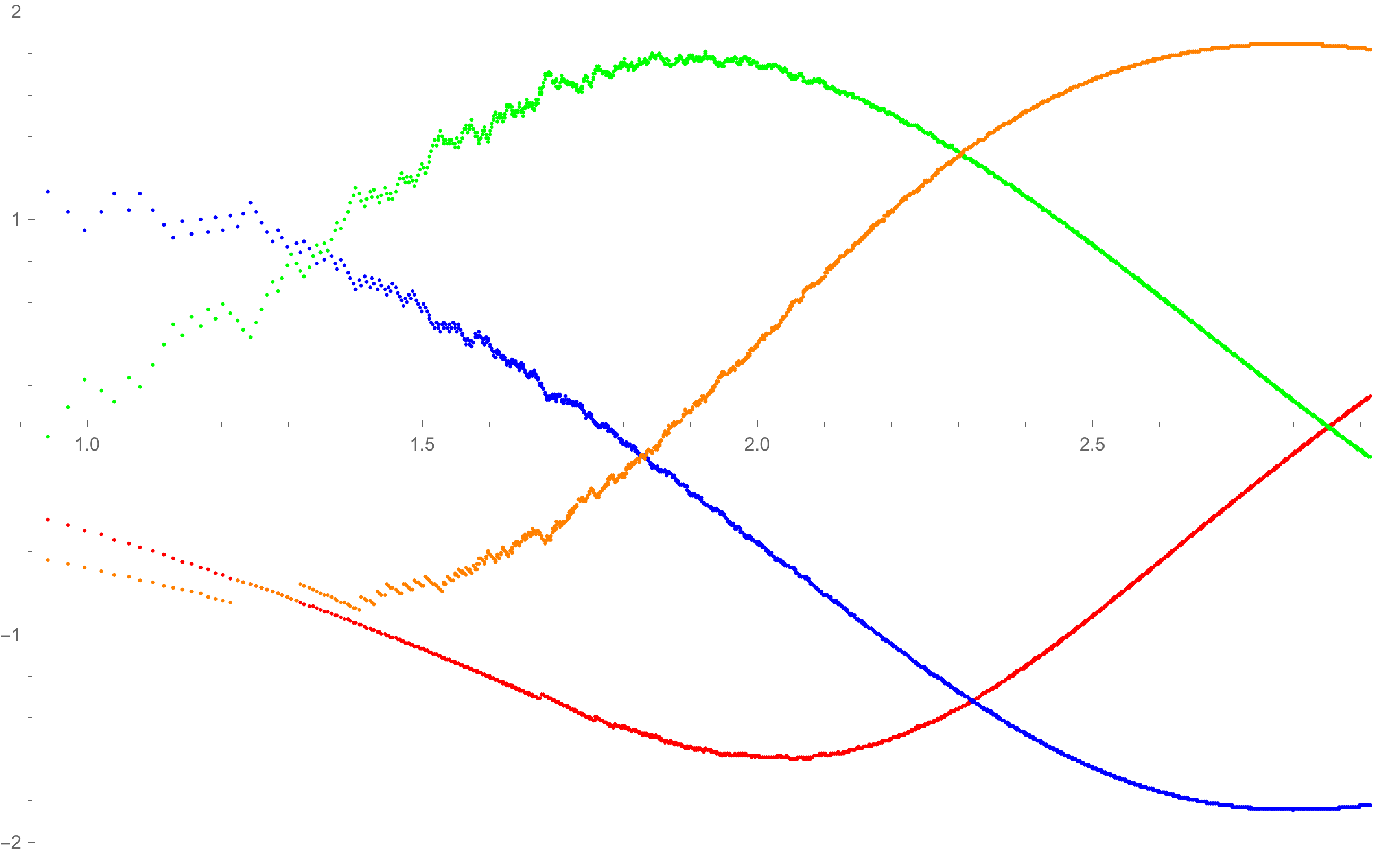}
\caption{$\omega$ mod 4 race}
}
\end{figure}

The plotted points along with their colours are as follows:
\begin{align*}
&\text{ Red: }\{\;\big(\log\log x, (\frac{1}{x}N_{0,4}(x)-\frac{1}{4})\log x\big) \::\:10\leq x \leq 10^8\},\\
&\text{ Blue: }\{\;\big(\log\log x, (\frac{1}{x}N_{1,4}(x)-\frac{1}{4})\log x\big) \::\:10\leq x \leq 10^8\},\\
&\text{ Orange: }\{\;\big(\log\log x, (\frac{1}{x}N_{2,4}(x)-\frac{1}{4})\log x\big) \::\:10\leq x \leq 10^8\},\\
&\text{ Green: }\{\;\big(\log\log x, (\frac{1}{x}N_{3,4}(x)-\frac{1}{4})\log x\big) \::\:10\leq x \leq 10^8\}.
\end{align*}

This data strongly suggests, at least for the $q=4$ race, that any ordering not of the form stated in Theorem~\ref{thm4} can $\emph{never}$ occur. It may not be unreasonable to conjecture that this is the case for all $q$.

We remark that similar results can be proved for the sets $$\{n\leq x \: : \: \Omega(n) \equiv a \mod q\} \text{   and   } \{n, \text{ square-free }\leq x \: : \: \omega(n) \equiv a \mod q\}$$ where $\Omega(n) = \sum_{p^k|n}1$ counts prime divisors with multiplicities.

\section{Preliminaries}

To save space, we will use $\log_2 x$ and $e_2(x)$ to denote $\log\log x$ and $\exp(\exp(x))$ respectively. We start by proving an asymptotic formula for $N_{a,q}(x)$.

\begin{prop}\label{prop5}
For $x\geq 3$ we have $$N_{a,q}(x) = \frac{x}{q}\left(1+2|g(\phi)|\cos\Big(\sin\Big(\frac{2\pi}{q}\Big)\log_2 x + \theta -\frac{2\pi a}{q}\Big)(\log x)^{\cos(\frac{2\pi}{q})-1} + O((\log x)^{\cos(\frac{4\pi}{q})-1})\right)$$

where $g(z):=\frac{1}{\Gamma(z)}\prod_p \Big(1+\frac{z}{p-1}\Big)(1-p^{-1})^z$ and $\phi = \phi(q) = e^{2\pi i /q}$ and $\theta = \arg g(\phi)$.
\end{prop}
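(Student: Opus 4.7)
The plan is to convert the counting problem into one about mean values of multiplicative functions via orthogonality of additive characters modulo $q$, and then to invoke Selberg's classical theorem. Starting from
$$\mathbf{1}[\omega(n)\equiv a\pmod q] = \frac{1}{q}\sum_{j=0}^{q-1}\phi^{j(\omega(n)-a)},$$
I would immediately obtain
$$N_{a,q}(x) = \frac{1}{q}\sum_{j=0}^{q-1}\phi^{-ja}S_j(x), \qquad S_j(x):=\sum_{n\leq x}\phi^{j\omega(n)}.$$
The $j=0$ term contributes $\lfloor x\rfloor/q = x/q + O(1)$, supplying the leading factor $x/q$ in the statement.

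For each $j\in\{1,\ldots,q-1\}$ I would apply Selberg's theorem in the form
$$\sum_{n\leq x}z^{\omega(n)} = xg(z)(\log x)^{z-1} + O\bigl(x(\log x)^{\operatorname{Re}(z)-2}\bigr),$$
valid for $|z|\leq 1$, specialised to $z=\phi^j$. When $j\notin\{1,q-1\}$, the inequality $\cos(2\pi j/q)\leq \cos(4\pi/q)$ ensures that both the leading and error contributions of $\phi^{-ja}S_j(x)$ are of size $O(x(\log x)^{\cos(4\pi/q)-1})$, which is absorbed into the proposition's error term. For $j\in\{1,q-1\}$, since $\omega(n)\in\mathbb Z$ and $\phi^{q-1}=\bar\phi$, we have $S_{q-1}(x)=\overline{S_1(x)}$, and hence
$$\phi^{-a}S_1(x)+\phi^{a}S_{q-1}(x)=2\operatorname{Re}\bigl(\phi^{-a}S_1(x)\bigr).$$
Writing $g(\phi)=|g(\phi)|e^{i\theta}$ and using the polar decomposition
$$(\log x)^{\phi-1} = (\log x)^{\cos(2\pi/q)-1}\exp\bigl(i\sin(2\pi/q)\log_2 x\bigr),$$
taking the real part produces the cosine factor with the advertised phase $\sin(2\pi/q)\log_2 x+\theta-2\pi a/q$. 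Dividing by $q$ and assembling all contributions yields the asymptotic of the proposition.

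The main technical obstacle is controlling the error uniformly in $q$. The naive Selberg error for the $j=1,q-1$ terms is of order $x(\log x)^{\cos(2\pi/q)-2}$, which for a handful of small values of $q$ can exceed the claimed bound $x(\log x)^{\cos(4\pi/q)-1}$. To handle this uniformly I would invoke the sharper Selberg--Delange expansion, keeping one further term in the asymptotic for $S_1(x)$ and its conjugate: this secondary correction oscillates at the same frequency $\sin(2\pi/q)\log_2 x$ as the main fluctuation and so may be merged into the cosine term with a negligible perturbation of its amplitude and phase, leaving behind a genuine remainder of order $x(\log x)^{\cos(2\pi/q)-3}$, which is comfortably beneath $x(\log x)^{\cos(4\pi/q)-1}$ for every $q\geq 3$.
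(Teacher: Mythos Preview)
Your argument is essentially the same as the paper's: orthogonality of roots of unity, Selberg's asymptotic for $\sum_{n\le x}z^{\omega(n)}$, isolate the $j=1$ and $j=q-1$ contributions, and combine them via $g(\bar z)=\overline{g(z)}$ into a single cosine.

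You go further than the paper in worrying about the Selberg remainder $O\bigl(x(\log x)^{\cos(2\pi/q)-2}\bigr)$ coming from $j=1,q-1$; the paper simply absorbs it into the stated error $O\bigl(x(\log x)^{\cos(4\pi/q)-1}\bigr)$ without comment. Your observation that this absorption fails for $q=5$ (where $\cos(2\pi/5)-2\approx -1.69$ exceeds $\cos(4\pi/5)-1\approx -1.81$) is correct and is a genuine oversight in the paper's write-up. However, your proposed remedy does not work as stated: the next Selberg--Delange term has magnitude a full factor of $\log x$ smaller than the leading oscillation, so it cannot be ``merged into the cosine term with a negligible perturbation of its amplitude and phase''---the amplitude $|g(\phi)|$ and phase $\theta$ are fixed constants, whereas absorbing a term of order $(\log x)^{\cos(2\pi/q)-2}$ would force them to depend on $x$. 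The clean fix is simply to state the error as $O\bigl((\log x)^{\max(\cos(4\pi/q)-1,\,\cos(2\pi/q)-2)}\bigr)$; since both exponents are strictly below $\cos(2\pi/q)-1$, this is still $o$ of the secondary term and every subsequent application in the paper goes through unchanged.
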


\begin{proof}

The main result we will make use of is [1, Theorem 2] from which it follows that for $z$ a complex variable bounded in absolute value by 1, and $g(z)$ defined as in the proposition we have
\begin{equation}A_z(x):=\sum_{n\leq x}z^{\omega(n)}=g(z)x(\log x)^{z-1} + O\big(x(\log x)^{\Re z -2})\big).\end{equation}
These mean values satisfy
$$A_{\phi^j}(x)=\sum_{n\leq x}\phi^{j\omega(n)}=\sum_{k=0}^{q-1}\phi^{jk}N_{k,q}(x)\:\:\text{  for all } j = 0, \ldots, q-1.$$
We can isolate the $N_{a,q}(x)$ as follows
$$\frac{1}{q} \sum_{j = 0}^{q-1}\overline{\phi}^{aj} A_{\phi^j} = \frac{1}{q} \sum_{j=0}^{q-1} \overline{\phi}^{aj} \sum_{k=0}^{q-1} \phi^{jk} N_{k,q}(x)
= \sum_{k=0}^{q-1}  N_{k,q}(x)  \frac{1}{q} \sum_{j=0}^{q-1}  \phi^{j(k-a)} = N_{a,q}(x).$$
Substituting (2) into the line above gives
$$N_{a,q}(x)=\frac{x}{q}\left(1+\sum_{j=1}^{q-1}\Big(\overline{\phi}^{aj}g(\phi^j)e^{i\sin(\frac{2\pi j}{q})\log_2 x}(\log x)^{\cos (\frac{2\pi j}{q})-1} + O((\log x)^{\cos (\frac{2\pi j}{q})-2})\Big) \right).$$
For sufficiently large $x$, the terms in this sum with largest absolute value are those with $j=1$ and $j=q-1$. Each of the others is $\ll (\log x)^{\cos(\frac{4\pi}{q})-1}.$ Combining this observation with the fact that $g(\overline{z})=\overline{g(z)}$ we get

\begin{align*}
N_{a,q}(x)&=\frac{x}{q}\left(1+2\Re( \overline{\phi}^a g(\phi)e^{i \sin(\frac{2\pi}{q})\log_2 x})(\log x)^{\cos(\frac{2\pi}{q})-1} + O((\log x)^{\cos(\frac{4\pi}{q})-1})\right) \\
&=\frac{x}{q}\left(1+2|g(\phi)|\cos\Big(\sin\Big(\frac{2\pi}{q}\Big)\log_2 x + \theta -\frac{2\pi a}{q}\Big)(\log x)^{\cos(\frac{2\pi}{q})-1} + O((\log x)^{\cos(\frac{4\pi}{q})-1})\right)
\end{align*}
where $\theta = \arg g(\phi).$

\end{proof}

Notice that if $q=2$ then $\phi=-1$ and $g(-1)=0$. It is for this reason we cannot say any more in this most interesting case. Indeed we actually suspect a much stronger error term of $O(x^{1/2+o(1)})$ in this case. For $q>2$ though, $g(\phi)\neq 0$, as $\Gamma$ has no pole there and the product has only non-zero terms.

From this, we see immediately that
$$N_{a,q}(x) - \frac{x}{q} = O\left(\frac{x}{(\log x)^{1-\cos(\frac{2\pi}{q})}}\right)$$
and also
$$N_{a,q}(x) - \frac{x}{q} = \Omega_{\pm}\left(\frac{x}{(\log x)^{1-\cos(\frac{2\pi}{q})}}\right).$$

Before trying to understand when $N_{a,q}(x)$ is less than this average value of $x/q$, we start with the simpler, but related, question of when the secondary term is negative. That is, when $$\cos\Big(\sin\Big(\frac{2\pi}{q}\Big)\log_2 x + \theta -\frac{2\pi a}{q}\Big)<0.$$ Now cosine is negative ``about half the time" which might suggest that $N_{a,q}(x)$ is less than its average value ``about half the time" too. To be precise, the set $\{n\in \mathbb{N}:\cos n <0\}$ has \textit{natural density} 1/2 in $\mathbb{N}$. That is, $\lim_{x\rightarrow \infty}\frac{1}{x}\sum_{\substack{ n\leq x \\ \cos n<0}}1 =\frac{1}{2}.$ It is not true though that $\{n\in \mathbb{N} : \cos (\log_2 n) < 0\}$ has natural density 1/2. In fact this set has no natural density, as we shall see below. The presence of the $\cos(\log_2 x)$ type term is why we ought to be looking at the $\log\log$ density.

The property of possessing a natural density is stronger than that of possessing a logarithmic density, which is stronger still than having a loglog density. In fact, a straightforward application of partial summation proves that if a set $E\subset \mathbb{N}$ has a natural density then it also has a logarithmic density and the two are equal, and if $E$ has a logarithmic density then it has a loglog density and the two are equal. The following lemma, which we do not prove here, reassures us that our notions of logarithmic and loglog density are sound.

\begin{lemma}\label{lem} There exist constants $\gamma$, $\mu$ such that
\begin{align*}
\sum_{1\leq n \leq x}\frac{1}{n} &= \log x + \gamma + O\Big(\frac{1}{x}\Big).\\
\sum_{2\leq n \leq x}\frac{1}{n\log n} &= \log_2 x + \mu + O\Big(\frac{1}{x \log x}\Big).
\end{align*}
\end{lemma}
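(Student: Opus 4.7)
The plan is to prove both asymptotics by the same device: compare a discrete sum to the corresponding integral via Euler--Maclaurin summation, equivalently by partial summation against the fractional-part function. For any positive differentiable $f$ on $[N,\infty)$ with $f' \in L^1([N,\infty))$, one has
$$\sum_{N \leq n \leq x} f(n) = \int_N^x f(t)\,dt + \tfrac{1}{2}\bigl(f(N)+f(x)\bigr) - \int_N^x \bigl(\{t\}-\tfrac{1}{2}\bigr)f'(t)\,dt,$$
and the remainder integral converges as $x \to \infty$ with tail of order $\int_x^\infty |f'(t)|\,dt$. Hence the sum equals $\int_N^x f(t)\,dt + C_f + O\bigl(f(x) + \int_x^\infty |f'|\bigr)$ for a suitable constant $C_f$ depending only on $f$ and $N$.

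Applied with $f(t) = 1/t$ on $[1,\infty)$, this gives $\int_1^x dt/t = \log x$ and $|f'(t)| = 1/t^2$ with tail exactly $1/x$, yielding the first asymptotic with $\gamma$ equal to the Euler--Mascheroni constant. For the second formula, take $f(t) = 1/(t \log t)$ on $[2,\infty)$. A direct differentiation gives $|f'(t)| = (\log t + 1)/(t^2 (\log t)^2) \ll 1/(t^2 \log t)$, whose integral from $x$ to infinity is $O(1/(x \log x))$. Combined with $\int_2^x dt/(t \log t) = \log_2 x - \log_2 2$, this delivers the second asymptotic, with $\mu$ equal to $-\log_2 2$ plus the constants coming from the boundary term and the convergent Euler--Maclaurin remainder at $t=2$, and with error $O(1/(x \log x))$.

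Neither step presents a serious obstacle. The only point requiring any care is the bound on $|f'|$ for $f(t) = 1/(t \log t)$: the extra factor of $1/\log t$ relative to $f$ itself is precisely what ensures that the tail integral of $|f'|$ is of the same order as $f(x)$, so that the resulting error term matches the size of the last term dropped from the discrete sum.
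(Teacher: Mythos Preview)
Your argument is correct: Euler--Maclaurin (or equivalently Abel summation against $\{t\}$) applied to $f(t)=1/t$ and $f(t)=1/(t\log t)$ gives exactly the stated asymptotics, and your bound $|f'(t)|\ll 1/(t^2\log t)$ together with $\int_x^\infty dt/(t^2\log t)\leq (1/\log x)\int_x^\infty dt/t^2 = 1/(x\log x)$ is the right way to control the tail. The paper itself does not prove this lemma at all --- it explicitly states the result as classical and omits the proof --- so there is nothing to compare your approach against; you have simply supplied the missing (and entirely standard) details.
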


\begin{prop}\label{prop7}
Let $A, B \in \mathbb{R}$ with $A > 0$. The set $$E:=\{n\in \mathbb{N} : \cos(A\log_2(n) + B)<0\}$$ has \emph{loglog density} 1/2 but no natural or logarithmic densities.
\end{prop}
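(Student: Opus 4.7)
The plan is to reduce each of the three density statements to a Lebesgue-style calculation via a substitution matching the density's weight: $w = \log_2 t$ for the loglog density, $v = \log t$ for the logarithmic density, and then invoke the implication ``natural density $\Rightarrow$ logarithmic density'' for the last claim.

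I would first prove the loglog density equals $1/2$. Decompose $E \cap \mathbb{N}$ into the maximal integer blocks $[n_k^-, n_k^+]$ on which $\cos(A\log_2 n + B)<0$; the endpoints satisfy $A\log_2 n_k^\pm + B \equiv \pm\pi/2 \pmod{2\pi}$, so $\log_2 n_k^+ - \log_2 n_k^- = \pi/A$. Applying the second line of Lemma~\ref{lem} on each block gives
$$\sum_{n_k^- < n \leq n_k^+} \tfrac{1}{n\log n} = \tfrac{\pi}{A} + O\!\left(\tfrac{1}{n_k^- \log n_k^-}\right),$$
and the number of complete blocks inside $[1,X]$ is $\tfrac{A}{2\pi}\log_2 X + O(1)$. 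The error terms telescope to $O(1)$, so $\sum_{n\leq X,\, n\in E} 1/(n\log n) = \tfrac{1}{2}\log_2 X + O(1)$, and dividing by $\log_2 X$ yields the density $1/2$.

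For the logarithmic density I would use the same block decomposition with the first line of Lemma~\ref{lem}, reducing $\sum_{n\leq X,\, n\in E} 1/n$ up to $O(1)$ to the partial sum $\sum_{k} (\log d_k - \log c_k)$, where $c_k = e^{(\pi/2-B+2k\pi)/A}$ and $d_k = e^{(3\pi/2-B+2k\pi)/A}$. These satisfy $d_k/c_k = \sqrt{\alpha}$ and $c_{k+1}/c_k = \alpha$ with $\alpha := e^{2\pi/A}>1$. Evaluating along the two subsequences $\log X = c_K$ and $\log X = d_K$ produces partial sums asymptotic to $\log X/(\sqrt{\alpha}+1)$ and $\sqrt{\alpha}\log X/(\sqrt{\alpha}+1)$ respectively, so the logarithmic density oscillates between the distinct values $\tfrac{1}{\sqrt{\alpha}+1}$ and $\tfrac{\sqrt{\alpha}}{\sqrt{\alpha}+1}$ and hence does not exist.

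Absence of a natural density then follows by a standard partial-summation argument using the first asymptotic of Lemma~\ref{lem}: if the natural density existed it would force the logarithmic density to exist with the same value, contradicting the previous paragraph. I expect the main technical obstacle to be the bookkeeping in the block decomposition---specifically, controlling the incomplete block at the top of the range and checking that the per-block error terms from Lemma~\ref{lem} truly telescope rather than accumulate into something that would blur the distinction between the two limiting values in the logarithmic case.
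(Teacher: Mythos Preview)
Your approach is essentially the same as the paper's: both decompose $E$ into the intervals $(y_{N-1},x_N)$ on which the cosine is negative and apply Lemma~\ref{lem} blockwise. Two small points. First, in your logarithmic-density paragraph there is a slip: with $c_k,d_k$ defined as you have them (so that $c_k=\log n_k^-$, $d_k=\log n_k^+$), the block contribution from the first line of Lemma~\ref{lem} is $d_k-c_k$, not $\log d_k-\log c_k$; your subsequent ratio computation and the claimed limiting values $\tfrac{1}{\sqrt\alpha+1}$ and $\tfrac{\sqrt\alpha}{\sqrt\alpha+1}$ are correct once this is fixed. The paper's version of this step is slightly slicker: it simply observes that $\log y_N=e^{\pi/A}\log x_N$ while the numerator $\sum_{n\le x,\,n\in E}1/n$ is the same at $x=x_N$ and $x=y_N$, forcing the putative limit to satisfy $L=e^{\pi/A}L$. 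Second, for the natural density the paper argues directly that the $\liminf$ is $0$ and the $\limsup$ is $1$ (by comparing $[x_N]-[y_{N-1}]$ with $x_N$, etc.), which is strictly more than you get from the contrapositive of ``natural $\Rightarrow$ logarithmic''; this stronger statement is what feeds into Theorem~\ref{thm1}, so you would eventually want it anyway.
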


\begin{proof}
For $N\in \mathbb{N}$, let $x_N:= e_2((2N\pi-\pi/2-B)/A) $ and $y_N:=e_2((2N\pi+\pi/2-B)/A)$ so that
\begin{align*}
\cos(A\log_2 x+B) > 0 &\Leftrightarrow x\in (x_N,y_N) \text{ for some } N \text{ and }\\
\cos(A\log_2 x+B) < 0 &\Leftrightarrow x\in (y_{N-1},x_{N}) \text{ for some }N.
\end{align*}
Writing $[\alpha]$ for the largest integer at most $\alpha$, we therefore have
$$\frac{1}{x_N}\sum_{\substack{n\leq x_N \\ n\in E}}1 \geq \frac{[x_N]-[y_{N-1}]}{x_N} \rightarrow 1 \text{ as } N\rightarrow \infty$$
and
$$\frac{1}{y_N}\sum_{\substack{n\leq y_N \\ n\in E}}1 \leq 1 -  \frac{[y_N]-[x_N]}{y_N} \rightarrow 0 \text{ as } N\rightarrow \infty,$$
so $E$ certainly doesn't have a natural density. When we look at the logarithmic density, we get, since $\log y_N = e^{\pi/A}\log x_N,$
$$\frac{1}{\log x_N}\sum_{\substack{n\leq x_N \\ n\in E}}\frac{1}{n} = \frac{e^{\pi/A}}{\log y_N}\sum_{\substack{n\leq y_N \\ n\in E}}\frac{1}{n},$$
so for the limit to exist as $N\rightarrow \infty$ we would need $e^{\pi/A} = 1$ which is impossible.

When we look at the loglog density however, we get, using Lemma~\ref{lem}
\begin{align*}
\limsup_{x\rightarrow \infty}\frac{1}{\log_2 x}\sum_{\substack{n\leq x \\ n\in E}}\frac{1}{n\log n} &= \lim_{N\rightarrow \infty}\frac{1}{\log_2 x_N}\sum_{m=2}^{N}\sum_{n\in(y_{m-1}, x_m)}\frac{1}{n\log n} \\
&= \lim_{N\rightarrow \infty}\frac{1}{\log_2 x_N}\sum_{m=2}^{N}\left(\log_2(x_{m})-\log_2(y_{m-1})+\mu - \mu +O\Big(\frac{1}{m\log m}\Big) \right)\\ 
&= \lim_{N\rightarrow \infty}\frac{1}{(2N\pi-\pi/2-B)/A}\sum_{m=2}^{N}\left(\pi/A + O\Big(\frac{1}{m\log m}\Big) \right)\\
&= \frac{1}{2}.
\end{align*}
A similar calculation shows that $\liminf_{x\rightarrow \infty}\frac{1}{\log_2 x}\sum_{\substack{n\leq x \\ n\log n\in E}}\frac{1}{n\log n} =\frac{1}{2}$ and the result follows.
\end{proof}

It is tempting to conclude that the set $$\{x \in \mathbb{N} \::\: N_{a, q}(x)<\frac{x}{q}\}$$ has no natural or logarithmic densities but has loglog density 1/2. Unfortunately, to prove this rigorously we will need to account for the error introduced by the terms we have left out. We will do this shortly. If we forget about error terms for the moment though (which we can only really do when $\sin(2\pi/q)\log_2 x +\theta-\frac{2\pi a}{q}$ is not too close to a zero of $\cos$), then asking ``for which value of $a$ is $N_{a,q}(x)$ largest" is tantamount to asking ``for which value of $a$ is $\frac{2\pi}{q}\Big(\frac{q}{2\pi}(\sin(2\pi/q)\log_2 x +\theta)-a\Big)$ closest to some $2n\pi \in 2\pi\mathbb{Z}$". The answer is the closest integer to $\frac{q}{2\pi}(\sin(2\pi/q)\log_2 x +\theta)$ modulo $q$ which clearly depends on $x$. Any given $a$ will therefore produce the most values of $n\leq x$ such that $\omega(n)\equiv a \mod q$ when there exists some $m\in \mathbb{Z}$ such that, $$a-\frac{1}{2} < \frac{q}{2\pi}(\sin(2\pi/q)\log_2 x +\theta)+mq < a +\frac{1}{2}.$$ A similar calculation to that in the proof of Proposition~\ref{prop7} shows that for each $a$, the set of such $x$ values has loglog density $1/q$.

We end this section with a picture of the curves
$$\cos\Big(\sin\Big(\frac{2\pi}{6}\Big)\log_2 x + \theta -\frac{2\pi a}{6}\Big)$$
for $a = 0 , 1, \ldots, 5$ plotted on a scale which makes the oscillations visible. 

\begin{figure}[H]
{
\centering
\includegraphics[scale=0.37]{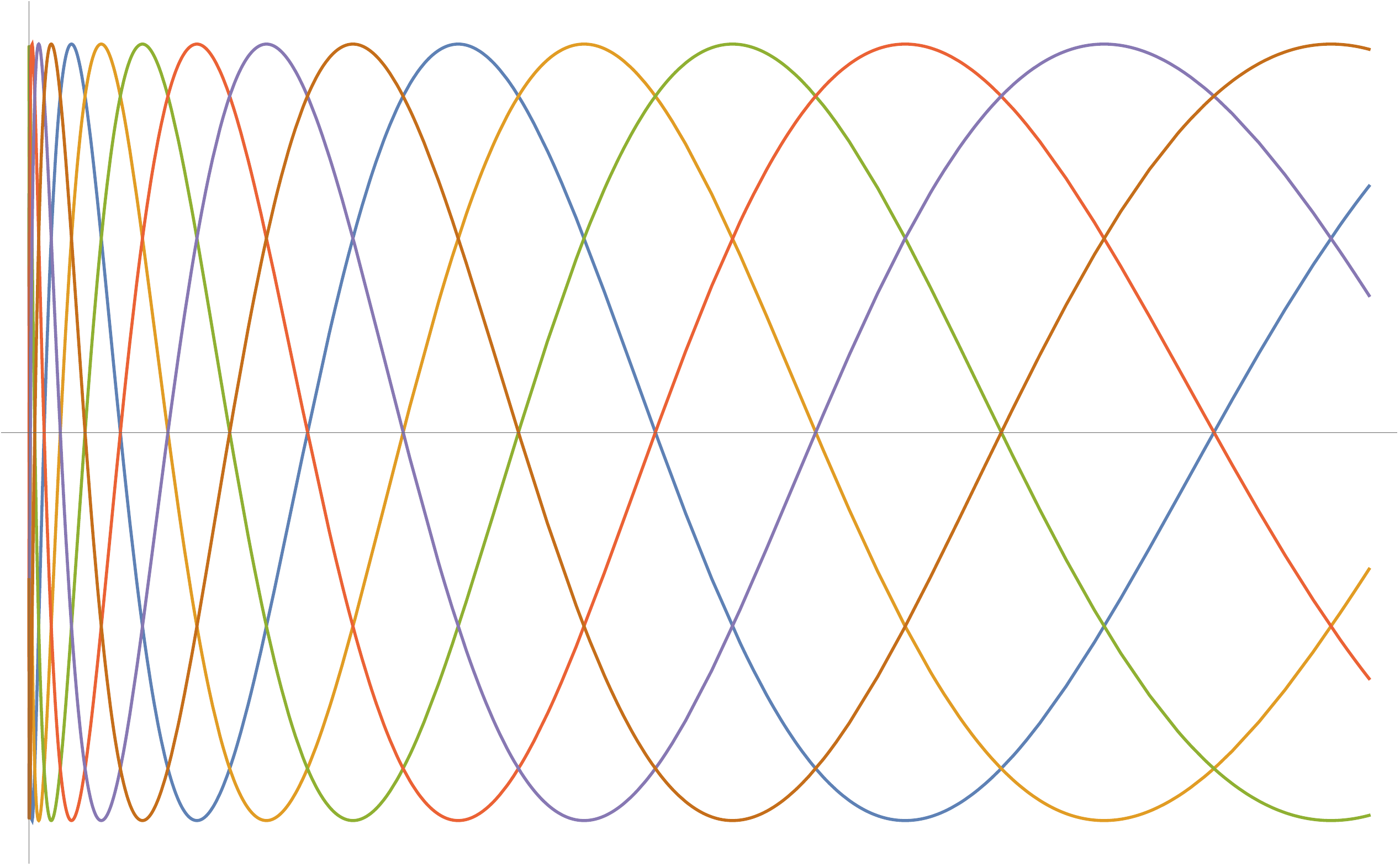}
\caption{Shifted sinusoidal curves}
}
\end{figure}

Although this picture isn't to be taken too seriously it serves as a useful illustration to have in mind for comparing the secondary terms.

\section{Proof of Theorem 3}

\begin{proof}
Let $q\geq 3$ be some fixed integer, $a \in \{0,1,\ldots, q-1\}$ and $\theta$ be defined as in Proposition~\ref{prop5}. Let $\epsilon>0$ be small and define $$U_{a,q}^{\epsilon-}:=\{x\in\mathbb{N}\::\: -\frac{\pi}{q}+\sqrt{\epsilon}<\sin(\frac{2\pi}{q})\log_2 x + \theta - \frac{2\pi a}{q} + 2n\pi < -\sqrt{\epsilon} \text{ for some }n\in \mathbb{Z}\},$$
$$ U_{a,q}^{\epsilon+}:=\{x\in\mathbb{N}\::\: -\frac{\pi}{q}-\sqrt{\epsilon}<\sin(\frac{2\pi}{q})\log_2 x + \theta - \frac{2\pi a}{q} + 2n\pi < \sqrt{\epsilon} \text{ for some }n\in \mathbb{Z}\}.$$
First let us see how, for small $\epsilon$, these sets approximate $U_{a,q}$. Our formula for $N_{a,q}(x)$ gives 
\begin{multline*}
\frac{N_{a,q}(x)-N_{b,q}(x)}{(\log x)^{\cos(\frac{2\pi}{q})-1}}=\\2|g(\phi)|\Big(\cos\Big(\sin(\frac{2\pi}{q})\log_2 x + \theta -\frac{2\pi a}{q}\Big)- \cos\Big(\sin(\frac{2\pi}{q})\log_2 x + \theta -\frac{2\pi b}{q}\Big)\Big) +o(1)
\end{multline*}
as $x \rightarrow \infty$ and for $q\geq3$ we have $g(\phi)\neq0$. Therefore, for all $\epsilon>0$ there exists some $X_0(\epsilon)$ such that for $x\geq X_0$ and for each $a, b \in \{0, \ldots, q-1\}$, $$\cos\Big(\sin(\frac{2\pi}{q})\log_2 x + \theta -\frac{2\pi a}{q}\Big)- \cos\Big(\sin(\frac{2\pi}{q})\log_2 x + \theta -\frac{2\pi b}{q}\Big)>\epsilon$$
which implies that $N_{a,q}(x)>N_{b,q}(x).$ We will use this fact to prove that for $x\geq X_0$ and $\epsilon$ sufficiently small we have
\begin{equation}
x\in U^{\epsilon-}_{a,q} \Rightarrow x \in U_{a,q}
\end{equation}
and
\begin{equation}
x \in U_{a,q} \Rightarrow x \in U^{\epsilon+}_{a,q}.
\end{equation}
It follows that $$ \sum_{\substack{ x\leq X \\ x\in U_{a, q}^{\epsilon-}}}\frac{1}{x \log x} + O_\epsilon(1) \leq \sum_{\substack{ x\leq X \\ n\in U_{a, q}}}\frac{1}{x \log x} \leq \sum_{\substack{ x\leq X \\ x\in U_{a, q}^{\epsilon+}}}\frac{1}{x \log x}+O_\epsilon(1).$$
After showing that each of $U_{a,q}^{\epsilon\pm}$ has loglog density $\frac{1}{2q}+o_\epsilon(1)$ for arbitrarily small $\epsilon$, where $o_\epsilon(1)$ is a quantity that tends to 0 as $\epsilon \rightarrow 0$, we will have shown that $U_{a,q}$ has loglog density $\frac{1}{2q}$. The result for $V_{a,q}$ is proved in much the same way.\\

\par

\textit{Proof of (3) and (4)}.

Suppose $x\geq X_0$ and $x\in U^{\epsilon-}_{a,q}$ and $\epsilon$ is small enough so that $\sin(\frac{\pi}{q})\sin(\sqrt{\epsilon})>\epsilon.$ For example, $\epsilon < 1/q^2$ will do. In order to show that $x \in U_{a,q}(x)$ we need to show
\begin{enumerate}[label=(\alph*)]
\item        $N_{a-i,q}(x)>N_{a+i,q}(x)$ for all $i \in \{ 1, 2, \ldots, [\frac{q-1}{2}] \}$ 
\item        $N_{a+i,q}(x)>N_{a-i-1,q}(x)$ for all $ i \in \{ 0, 1, \ldots, [\frac{q-2}{2}] \}$.
\end{enumerate}
To do so we will use the identity
\begin{equation}\cos(\xi + A) - \cos(\xi + B) = -2\sin\left(\frac{A-B}{2}\right)\sin\left(\xi + \frac{A+B}{2}\right).\end{equation}
For (a), let $i \in \{ 1, 2, \ldots, \text{ or } [\frac{q-1}{2}] \}$, then
\begin{multline*}
\cos\Big(\sin(\frac{2\pi}{q})\log_2 x + \theta -\frac{2\pi (a-i)}{q}\Big)- \cos\Big(\sin(\frac{2\pi}{q})\log_2 x + \theta -\frac{2\pi (a+i)}{q}\Big)\\=-2\sin(\frac{2\pi i}{q})\sin(\xi)
\end{multline*}
where $\xi = \sin(\frac{2\pi}{q})\log_2 x + \theta -\frac{2\pi a}{q} \in \left(-\frac{\pi}{q}+ \sqrt{\epsilon}, -\sqrt{\epsilon} \right)$ which is $>\epsilon.$ This proves (a). For (b), let  $ i \in \{ 0, 1, \ldots, \text{ or } [\frac{q-2}{2}] \}$, then
\begin{multline*}\cos\Big(\sin(\frac{2\pi}{q})\log_2 x + \theta -\frac{2\pi (a+i)}{q}\Big)- \cos\Big(\sin(\frac{2\pi}{q})\log_2 x + \theta -\frac{2\pi (a-i-1)}{q}\Big)\\
=2\sin(\frac{\pi(2i+1)}{q})\sin(\xi+\frac{\pi}{q})
\end{multline*}
where again $\xi \in \left(-\frac{\pi}{q}+ \sqrt{\epsilon}, -\sqrt{\epsilon} \right)$ which is again $>\epsilon.$ This proves (b) and that $x \in U_{a,q}(x).$

Now suppose $x\geq X_0$ and $x \in U_{a,q}$. To show that $x\in U_{a,q}^{\epsilon+}$ we need to find some $n\in \mathbb{Z}$ such that $$-\frac{\pi}{q}-\sqrt{\epsilon}<\sin(\frac{2\pi}{q})\log_2 x + \theta - \frac{2\pi a}{q} + 2n\pi < \sqrt{\epsilon}.$$ Suppose this is not the case. Then either we can find some $n$ such that $$-\pi-\sqrt{\epsilon} \leq \sin(\frac{2\pi}{q})\log_2 x + \theta - \frac{2\pi a}{q} + 2n\pi \leq  -\frac{\pi}{q}-\sqrt{\epsilon}.$$ In which case
\begin{multline*}
\cos\Big(\sin(\frac{2\pi}{q})\log_2 x+\theta -\frac{2\pi (a-1)}{q}\Big)-\cos\Big(\sin(\frac{2\pi}{q})\log_2 x+\theta -\frac{2\pi a}{q}\Big) \\
=-2 \sin(\frac{\pi}{q})\sin(\xi + \frac{\pi }{q})
\end{multline*} for some $ \xi\in\Big[-\pi-\sqrt{\epsilon}, -\frac{\pi}{q}-\sqrt{\epsilon}\Big]$. This is then $>\epsilon$ and so $N_{a-1, q}(x)>N_{a, q}(x)$, contrary to our assumption on $x$.
Or else we can find some $n$ such that $$\sqrt{\epsilon} \leq \sin(\frac{2\pi}{q})\log_2 x + \theta - \frac{2\pi a}{q} + 2n\pi  \leq \pi-\sqrt{\epsilon}.$$ In which case 
\begin{multline*}
\cos\Big(\sin(\frac{2\pi}{q})\log_2 x+\theta -\frac{2\pi (a+1)}{q}\Big)-\cos\Big(\sin(\frac{2\pi}{q})\log_2 x+\theta -\frac{2\pi (a-1)}{q}\Big) \\
=-2\sin( -\frac{2\pi}{q} ) \sin(\xi   )
\end{multline*} for some $ \xi\in (\sqrt{\epsilon}, \pi-\sqrt{\epsilon})$. Again making the difference $>\epsilon$ and so $N_{a+1, q}(x)>N_{a-1,q}(x)$, again contrary to our assumption on $x$. We must therefore have $x \in U_{a,q}^{\epsilon+}$.\\

It remains to calculate the loglog densities of $U^{\epsilon-}_{a,q}$ and $U^{\epsilon+}_{a,q}$. This is similar to the proof of Proposition~\ref{prop7}. For $N\in\mathbb{N}$ define $x_N:=e_2((2N\pi+2a\pi/q-\theta-\pi/q+\sqrt{\epsilon})/\sin(2\pi/q))$ and $y_N:=e_2((2N\pi+2a\pi/q-\theta-\sqrt{\epsilon})/\sin(2\pi/q))$, then for $x\geq X_0$ we have $x \in U^{\epsilon-}_{a, q}$ if and only if $x\in (x_N, y_N)$ for some $N$ and we therefore have
\begin{align*}
\liminf_{X\rightarrow \infty}\frac{1}{\log_2 X} \sum_{ \substack{n\leq x \\ n\in U^{\epsilon-}_{a, q} }}\frac{1}{n \log n} &= \lim_{N \rightarrow \infty}\frac{1}{\log_2 x_N}\sum_{m=1}^{N-1}\sum_{n\in (x_m, y_m)}\frac{1}{n\log n} \\
&=  \lim_{N \rightarrow \infty}\frac{1}{\log_2 x_N}\sum_{m=1}^{N-1}\left(\frac{(\pi/q)-2\sqrt{\epsilon}}{\sin(2\pi/q)}+O\Big(\frac{1}{m \log m}\Big)\right) \\
&= \lim_{N \rightarrow \infty}\frac{\sin(2\pi/q)}{2N\pi+2a\pi/q-\theta-\pi/q+\sqrt{\epsilon}}\Big((N-1)\frac{(\pi/q)-2\sqrt{\epsilon}}{\sin(2\pi/q)}+O(\log N) \Big)\\
&=\frac{1}{2q} - \frac{\sqrt{\epsilon}}{\pi}.
\end{align*}
Also,
\begin{align*}
\limsup_{X\rightarrow \infty}\frac{1}{\log_2 X} \sum_{ \substack{n\leq x \\ n\in U^{\epsilon-}_{a, q} }}\frac{1}{n\log n} &= \lim_{N \rightarrow \infty}\frac{1}{\log_2(y_N))}\sum_{m=1}^{N}\sum_{n\in (x_m, y_m)}\frac{1}{n\log n}=\frac{1}{2q} - \frac{\sqrt{\epsilon}}{\pi}
\end{align*}

Hence the loglog density of $U^{\epsilon-}_{a,q}$ exists and is equal to $\frac{1}{2q}-\frac{\sqrt{\epsilon}}{\pi}$. A very similar calculation shows that the loglog density of $U^{\epsilon+}_{a,q}$ is $\frac{1}{2q}+\frac{\sqrt{\epsilon}}{\pi}$ and so by (4) and the fact that $\epsilon$ can be taken arbitrarily small we can conclude that $U_{a,q}$ has loglog density $\frac{1}{2q}$.

\end{proof}

\section{Proof of Theorem 4}

As in the previous proof, let $\epsilon$ be small enough and $X_0$ large enough so that $\sin(\frac{\pi}{q})\sin(\sqrt{\epsilon})>\epsilon$ and that for $x\geq X_0$ we have $$\cos\Big(\sin(\frac{2\pi}{q})\log_2 x + \theta -\frac{2\pi a}{q}\Big)- \cos\Big(\sin(\frac{2\pi}{q})\log_2 x + \theta -\frac{2\pi b}{q}\Big)>\epsilon \Rightarrow N_{a,q}(x)>N_{b,q}(x)$$
and
$$x\in U^{\epsilon-}_{a,q} \Rightarrow x \in U_{a,q}$$
and
$$x \in U_{a,q} \Rightarrow  x \in U^{\epsilon+}_{a,q}.$$

We will show that only the permutations stated in Theorem~\ref{thm4} can occur for $x\geq X_0$. Suppose that we have some $x\geq X_0$ for which $N_{a,q}(x)$ is leading, that is, $\max_{c \:mod\: q}N_{c,q}(x) = N_{a,q}(q)$. It follows that $x\in U_{a,q}^{\epsilon+} \cup V_{a,q}^{\epsilon+}$ since otherwise there would be some integer $n$ such that
$$\frac{\pi}{q}+\sqrt{\epsilon} \leq \sin(\frac{2\pi}{q})\log_2 x + \theta - \frac{2\pi a}{q} + 2n\pi  \leq 2\pi-\frac{\pi}{q}-\sqrt{\epsilon}$$
and hence
\begin{multline*}
\cos\Big(\sin(\frac{2\pi}{q})\log_2 x+\theta -\frac{2\pi b}{q}\Big)-\cos\Big(\sin(\frac{2\pi}{q})\log_2 x+\theta -\frac{2\pi a}{q}\Big) \\
=-2\sin( \frac{\pi(a-b)}{q} ) \sin(\xi +\frac{\pi(a-b)}{q}  )
\end{multline*}
for some $\xi \in (\frac{\pi}{q}+\sqrt{\epsilon}\:,\:\: 2\pi -\frac{\pi}{q} - \sqrt{\epsilon}).$ But then this is $>\epsilon$ for either $b=a+1 \mod q$ or $b=a-1 \mod q$ contradicting the assumption that $N_{a,q}(x)$ was leading.

To prove Theorem~\ref{thm4} it suffices to prove that $\max _{\pm 1}N_{a \pm 1, q}(x) > N_{b,q}(x)$ for all $b \neq a, a\pm1 \mod q$. This follows, in a by now familiar fashion, from the fact that there exists an integer $n$ such that
$$-\frac{\pi}{q}-\sqrt{\epsilon} < \sin(\frac{2\pi}{q})\log_2 x + \theta - \frac{2\pi a}{q} + 2n\pi  < \frac{\pi}{q}+\sqrt{\epsilon}$$
since then
\begin{align*}
&\max _{\pm} \:\:\:\cos\Big(\sin(\frac{2\pi}{q})\log_2 x+\theta - \frac{2\pi(a \pm 1)}{q}\Big)-\cos\Big(\sin(\frac{2\pi}{q})\log_2 x+\theta -\frac{2\pi b}{q}\Big) \\
&=\max_{\pm} \:\:\:-2\sin( \pi\frac{b-(a \pm 1)}{q}) \sin(\xi -\frac{\pi(b-a\pm 1)}{q}  )\\
&=\max_{\pm} \:\:\: 2\sin( \pi\frac{b-(a \pm 1)}{q}) \sin(\pi\frac{(b-a\pm 1)}{q} -\xi )
\end{align*}
where $\xi \in (-\frac{\pi}{q}-\sqrt{\epsilon}\:,\:\: \frac{\pi}{q} + \sqrt{\epsilon})$, so this is $>\epsilon$ for $b \neq a, a\pm 1 \mod q$ which proves the claim.

\section*{Acknowledgements}

The author would like to thank his supervisor, Andrew Granville, for suggesting this question. This work was supported by the Engineering and Physical Sciences Research Council EP/L015234/1 via the EPSRC Centre for Doctoral Training in Geometry and Number Theory (The London School of Geometry and Number Theory), University College London.

\nocite{Mont-Vau}
\nocite{Sel-Sat}

\end{document}